\def\Ito{It\^{o} }
\def\Holder{H\"{o}lder }
\def\Laplace{\boldsymbol{\Delta}}
\def\bR{{\mathbb{R}}}
\def\bE{{\mathbb{E}}}
\def\cF{{\mathcal{F}}}
\def\oppr{\hbox{\rm \raise.2ex\hbox{${\scriptstyle | }$}\kern-.32em(}}
\def\clbr{\hbox{\rm ]\kern-.15em]}}
\newtheorem{theorem}{Theorem}
\theoremstyle{plain}
\newtheorem{corollary}[theorem]{Corollary}
\newtheorem{definition}[theorem]{Definition}
\newtheorem{proposition}[theorem]{Proposition}
\newtheorem{remark}[theorem]{Remark}
\numberwithin{equation}{section}
\numberwithin{equation}{section}
\numberwithin{theorem}{section}
\begin{document}
\title[Stochastic Porous Medium Equation]
 {A Random Change of Variables and Applications to the
 Stochastic Porous Medium Equation with
Multiplicative Time Noise}
\author{S. V. Lototsky}
\curraddr[S. V. Lototsky]{Department of Mathematics, USC\\
Los Angeles, CA 90089}
\email[S. V. Lototsky]{lototsky@math.usc.edu}
\urladdr{http://math.usc.edu/$\sim$lototsky}

\thanks{The author acknowledges support
 from the NSF CAREER award DMS-0237724.}

\subjclass[2000]{Primary 60H15; Secondary 35R60, 76S05}
\keywords{Barenblatt's solution, closed-form solutions,
long-time asymptotic, random time change}

\begin{abstract}
A change of variables is introduced to reduce certain nonlinear
stochastic evolution equations with multiplicative noise to the
corresponding deterministic equation. The result is then
used to investigate a stochastic porous
medium equation.

\end{abstract}

\maketitle

\today

\section{Introduction}

Let $U=U(t,x)$ be a solution of the porous medium equation
\begin{equation}
\label{intr00}
U_t=\Laplace U^m,\ t>0,\, x\in \bR, \, m>1,
\end{equation}
 and let $X=X(t)$ be a semi-martingale. Consider  a
stochastic version of \eqref{intr00},
\begin{equation}
\label{intr0}
du=\Laplace u^m dt + u\,dX(t);
\end{equation}
the use of multiplicative noise preserves the positivity
of the solution. The first main result of the paper
is as follows:

\begin{theorem}
\label{th:intr1}
There is a one-to-one correspondence between the
solutions of the deterministic and stochastic
porous medium equations, given by
\begin{equation}
\label{intr1}
u(t,x)=h(t)U(H(t),x),
\end{equation}
where
$$
h(t)=1+\int_0^t h(s)dX(s),\ H(t)=\int_0^th^{m-1}(s)ds.
$$
\end{theorem}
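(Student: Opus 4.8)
The plan is to verify directly that the process $u$ defined by \eqref{intr1} solves \eqref{intr0} whenever $U$ solves \eqref{intr00}, and then to invert the transformation to obtain the one-to-one correspondence. The starting observations are structural: $h$ is the stochastic (Dol\'eans--Dade) exponential of $X$, i.e. the solution of $dh=h\,dX$ with $h(0)=1$, so that $h$ carries the entire martingale part of the construction; meanwhile $H(t)=\int_0^t h^{m-1}(s)\,ds$ is absolutely continuous, hence of finite variation, with $dH=h^{m-1}\,dt$. This separation of roles is what makes the computation clean.

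First I would apply the product rule for semimartingales to $u(t,x)=h(t)\,V(t,x)$ with $V(t,x):=U(H(t),x)$. Because $H$ is continuous and of finite variation, the ordinary chain rule applies to the time change and produces no second-order correction: $dV=U_t(H(t),x)\,dH=U_t(H(t),x)\,h^{m-1}(t)\,dt$, where $U_t$ denotes the derivative of $U$ in its first argument. For the same reason $V$ has no martingale component, so its quadratic covariation with $h$ vanishes, $d[h,V]=0$, and the product rule collapses to $du=V\,dh+h\,dV$. Substituting $dh=h\,dX$ and the expression for $dV$ gives
\[
du=h(t)V(t,x)\,dX(t)+h^m(t)\,U_t(H(t),x)\,dt=u(t,x)\,dX(t)+h^m(t)\,U_t(H(t),x)\,dt.
\]

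The crux is then to recognize the drift as $\Laplace u^m$. Since $U$ solves \eqref{intr00}, I replace $U_t$ by $\Laplace U^m$, and because the Laplacian acts only in $x$ while $h$ depends only on $t$, I can factor $h^m$ through it:
\[
h^m(t)\,U_t(H(t),x)=h^m(t)\,\Laplace U^m(H(t),x)=\Laplace\big(h^m(t)\,U^m(H(t),x)\big)=\Laplace u^m(t,x),
\]
using $u^m=h^m\,U^m(H(\cdot),x)$. This yields precisely $du=\Laplace u^m\,dt+u\,dX$, which is \eqref{intr0}.

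It remains to show the correspondence is one-to-one, which amounts to inverting the random time change. The key point is the positivity of $h$: in the continuous case $h(t)=\exp\big(X(t)-X(0)-\tfrac12\langle X\rangle_t\big)>0$, so $H$ is strictly increasing and admits a continuous inverse $H^{-1}$. Running the calculation backwards, the inverse map sends a solution $u$ of \eqref{intr0} to $U(s,x):=u(H^{-1}(s),x)/h(H^{-1}(s))$, which I expect to satisfy \eqref{intr00} via the same It\^o and chain-rule identities read in reverse. I anticipate the main obstacle to lie in this inverse direction rather than the forward one: namely, verifying that the time substitution genuinely removes all the randomness (so that $U$ is the deterministic object it is claimed to be) and that the regularity assumed of $u$ and $U$ is enough to justify each application of It\^o's formula and the factoring of the Laplacian. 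The forward implication, by contrast, is essentially a one-line consequence of the product rule once the finite-variation nature of $H$ is exploited.
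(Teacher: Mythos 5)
Your proposal is correct and takes essentially the same route as the paper: the paper (Propositions \ref{prop:snl1} and \ref{prop:spm-main}) likewise applies the \Ito product rule to $u(t,x)=U(H(t),x)h(t)$, noting $dh=h\,dX$ and $dH=h^{m-1}dt$, and identifies the drift using homogeneity of degree $m$ of the map $U\mapsto\Laplace U^{m}$ --- which is exactly your step of factoring $h^{m}$ through the Laplacian --- then obtains the converse by inverting the strictly increasing time change $H$ (via $h>0$) and running the same computation backwards. Your proof and the paper's operate at the same formal level of rigor, including in the reverse direction, which the paper also treats briefly.
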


Thus,  many of the results known for the
deterministic equation \eqref{intr00}, as summarized,
 for example, in Aronson \cite{Aronson}, have a clear
 counterpart in  the
stochastic case, and the objective of  the paper is
 to the study this correspondence.
 To keep the presentation
from becoming unnecessarily abstract, a particular semi-martingale
$X$ is considered: $X(t)=\int_0^t g(s)ds + \int_0^t f(s)dw(s)$,
where $w$ is a standard Brownian motion.
The results derived in this manner  can serve as a benchmark for
further investigation of the stochastic porous medium
equation, in particular, driven by space-time noise.

Here is a typical consequence of combining \eqref{intr1}
with the known facts about the deterministic porous medium equation.

\begin{theorem}
Let $u_0(x)$ be a non-negative continuous function with
compact support in $\bR^d$. Then there exists a unique continuous
non-negative random field $u=u(t,x)$
such that, for every smooth compactly supported function $\varphi$,
and every $t>0$, the equality
\begin{equation*}
\begin{split}
\int_{\bR^d} u(t,x)\varphi(x)\,dx&=
\int_{\bR^d} u_0(x)\varphi(x)\,dx+
\int_0^t \int_{\bR^d} u^m(t,x)\Laplace \varphi(x)\, dx\\
&+ \int_0^t \int_{\bR^d} u(t,x)\varphi(x)\, dw(t)
\end{split}
\end{equation*}
holds with probability one (in other words, $u$ is the
solution of $du=\Laplace u^mdt + udw(t)$.)
In addition,
\begin{enumerate}
\item The function $u$ is \Holder continuous in $t,x$
on every set of the form $[T,+\infty)\times \bR^d$, $T>0$;
\item The mean mass is preserved:
$$
\bE \int_{\bR^d} u(t,x)dx=\int_{\bR^d} u(0,x)dx;
$$
\item The support of $u$ is uniformly bounded: there exists a
random variable $\eta$ such that, with probability one,
 $0<\eta<\infty$ and $u(t,x)=0$ for $|x|>\eta$ and all $t>0$.
 \item For every $x\in \bR$, $\lim_{t\to \infty} u(t,x)=0$ with
 probability one.
 \end{enumerate}
 \end{theorem}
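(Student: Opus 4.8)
The plan is to transfer every assertion to the deterministic porous medium equation through the correspondence of Theorem~\ref{th:intr1}, specialized to $X(t)=w(t)$. With this choice the multiplier solves $dh=h\,dw$, $h(0)=1$, so $h(t)=\exp\!\big(w(t)-t/2\big)$ is a strictly positive martingale with $\bE h(t)=1$, and $H(t)=\int_0^th^{m-1}(s)\,ds$ is a strictly increasing $C^1$ time change with $H(0)=0$. I would first record the single fact that drives all four conclusions: since $w(s)/s\to0$ almost surely, the exponent $(m-1)\big(w(s)-s/2\big)\to-\infty$, so $h^{m-1}(s)$ decays exponentially and
$$
H(t)\ \uparrow\ H_\infty:=\int_0^\infty h^{m-1}(s)\,ds<\infty\quad\text{a.s.}
$$
Thus the random time change maps $[0,\infty)$ bijectively onto the \emph{bounded} interval $[0,H_\infty)$; this boundedness is what ultimately forces the long-time statements (3) and (4).

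For existence, let $U$ be the unique non-negative continuous weak solution of $U_t=\Laplace U^m$ with $U(0,\cdot)=u_0$ (Aronson~\cite{Aronson}), and set $u(t,x)=h(t)U(H(t),x)$. Continuity and non-negativity are immediate from $h>0$ and the continuity of $U$, $h$, $H$. To verify the stochastic weak formulation I would set $m_\varphi(t):=\int_{\bR^d}U(H(t),x)\varphi(x)\,dx$; the deterministic weak form together with the chain rule gives $dm_\varphi=h^{m-1}(t)\langle U^m(H(t),\cdot),\Laplace\varphi\rangle\,dt$, a process of finite variation, so the \Ito product rule applied to $h(t)m_\varphi(t)$ with $dh=h\,dw$ reproduces exactly the claimed identity, using $u^m=h^mU^m(H(\cdot),\cdot)$; this is precisely the content of Theorem~\ref{th:intr1} in the weak setting. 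Uniqueness runs the correspondence backwards: any solution $u$ yields $U(\tau,x)=u\big(H^{-1}(\tau),x\big)/h\big(H^{-1}(\tau)\big)$ on $[0,H_\infty)$, a weak solution of the deterministic problem, which is unique; since $H(t)<H_\infty$ for every finite $t$, this pins down $u$ for all $t$.

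Properties (2)--(4) then follow by citing the matching deterministic facts and composing with $h$ and $H$. For (2), mass conservation of the deterministic flow gives $\int_{\bR^d}U(\tau,x)\,dx=\int u_0=:M_0$ for every $\tau$, hence $\int_{\bR^d}u(t,x)\,dx=h(t)M_0$, and taking expectations with $\bE h(t)=1$ yields the claim. For (3), finite speed of propagation furnishes a nondecreasing $R(\tau)$ with $\mathrm{supp}\,U(\tau,\cdot)\subseteq\{|x|\le R(\tau)\}$; since $h>0$, $\mathrm{supp}\,u(t,\cdot)=\mathrm{supp}\,U(H(t),\cdot)\subseteq\{|x|\le R(H_\infty)\}$, so $\eta:=R(H_\infty)$ is the desired random variable, finite because $H_\infty<\infty$ and positive whenever $u_0\not\equiv0$. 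For (4), as $t\to\infty$ we have $h(t)\to0$ while $U(H(t),x)\to U(H_\infty,x)<\infty$, so $u(t,x)\to0$.

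The remaining and most delicate point is the \Holder regularity in (1). Interior regularity for the deterministic equation gives, for each $\tau_0>0$, a uniform \Holder modulus for $U$ on $[\tau_0,\infty)\times\bR^d$; taking $\tau_0=H(T)>0$ confines the relevant deterministic times to the compact interval $[H(T),H_\infty]$, on which $U$ is uniformly \Holder and bounded. Composing with the locally Lipschitz $H$ and the almost surely \Holder continuous paths of $h$ yields local \Holder continuity of $u$ on compact time intervals. The genuine obstacle is upgrading this to a uniform estimate on the unbounded set $[T,\infty)\times\bR^d$: one must control the pathwise modulus of continuity of $h$ together with the decay $h(t)\to0$ and the convergence $H(t)\to H_\infty$, so that the \Holder seminorm of the product does not degenerate as $t\to\infty$. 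I expect this uniform-in-time bookkeeping, rather than any single deterministic input, to be the technical heart of the argument.
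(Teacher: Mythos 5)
Your route is exactly the paper's route: the multiplier $h(t)=e^{w(t)-t/2}$ and time change $H(t)=\int_0^t h^{m-1}(s)\,ds$ of Propositions \ref{prop:snl1} and \ref{prop:spm-main}, the key fact $H(t)\uparrow H_\infty<\infty$ a.s., and the transfer of the deterministic facts (Sabinina's existence/uniqueness and mass conservation, Caffarelli--Friedman regularity, finite propagation speed via comparison with Barenblatt-type profiles). Your proof that $H_\infty<\infty$ from $w(s)/s\to0$ is the same argument as the paper's (the Example concluding Section \ref{sec:LTB} takes $T_0=\sup\{t>0:w(t)>t/4\}$ and splits the integral there), and your treatment of existence, uniqueness, and items (2)--(4) matches Proposition \ref{prop:spm-main}, Theorem \ref{th:spme-main}, and the support theorem of Section \ref{sec:LTB}.

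The one genuine shortfall is item (1): you correctly diagnose that the issue is uniformity of the \Holder estimate on the unbounded time interval, but you then leave it as an expectation rather than an argument, so as written (1) is not proved. The missing observation is that the uniformity is forced by the very decay of $h$ you already established, so the gap closes in a few lines. Fix $\gamma<1/2$ and let $K_n$ be the \Holder-$\gamma$ seminorm of $w$ on $[n,n+1]$; Gaussian tail bounds and Borel--Cantelli give $K_n\leq C(\omega)\sqrt{\log n}$, while $w(t)\leq t/4$ for $t\geq T_0(\omega)$ gives $\sup_{[n,n+1]}h\leq e^{-n/4}$ for $n\geq T_0$. Since for $s,t\in[n,n+1]$ one has $|h(t)-h(s)|\leq h(s)\,|e^{\Delta}-1|$ with $\Delta=(w(t)-w(s))-(t-s)/2$, hence $|\Delta|\leq(K_n+1)|t-s|^{\gamma}$, the \Holder-$\gamma$ seminorm of $h$ on $[n,n+1]$ is at most $e^{-n/4}(K_n+1)e^{K_n+1}\to0$; gluing adjacent intervals, $h$ is uniformly \Holder on $[T,\infty)$. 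Note this genuinely uses the decay: $w$ itself is \emph{not} uniformly \Holder on a half-line, so the paper's one-sentence justification in the proof of Theorem \ref{th:spme-main} (``$h$ is \Holder continuous of any order less than $1/2$'') is itself only correct through this compensation. With that in hand, $H$ is Lipschitz on $[T,\infty)$, the deterministic solution $U$ is bounded and uniformly \Holder on $[H(T),H_\infty]\times\bR^d$ (a compact set in effect, since the spatial supports stay in a fixed ball), and the decomposition
$$
|u(t,x)-u(s,y)|\leq |h(t)-h(s)|\,\|U\|_{\infty}+h(s)\,|U(H(t),x)-U(H(s),y)|
$$
yields (1) with a random \Holder constant, which is all the almost-sure statement requires. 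So your instinct that this is the delicate point is sound, but the resolution is elementary bookkeeping, not a deep obstacle.
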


Section \ref{sec:CV} presents the general result about the
change of variables \eqref{intr1} for a large class of
nonlinear equations. Section \ref{sec:SPME}
discusses the basic questions about existence, uniqueness,
and regularity of the solution for the stochastic
porous medium equation, and Section \ref{sec:LTB}
investigates the long-time behavior of the solution.
Because of the random time change, the long-time
behaviors in the deterministic and stochastic cases
can be  different.

The following are some comments about the physical origin and
relevance of equation \eqref{intr0}, as well as connections with
 recent work on the
subject.  For a variety of distributed
system, there are two functions that describe the state of the
system at time $t$ and point $x\in \bR^d$;
by the analogy with the classical problem of the
gas flow, it is natural to call these functions the {\tt density}
$\rho(t,x)$ and the {\tt velocity} $\boldsymbol{v}(t,x)$.
A nearly universal {\tt equation of continuity} states that
\begin{equation}
\label{intr3}
\kappa \rho_t+\mathrm{div}(\rho\, \boldsymbol{v})=\sigma,
\end{equation}
where $\sigma=\sigma(t,x,\rho)$
is the density of sources and sinks, and
$\kappa$ is the fraction of the space available to the system.
Moreover, the underlying physical principles dictate the
equation of motion:
\begin{equation}
\label{intr4}
\boldsymbol{v}=\boldsymbol{F}(t,x,\rho, \mathrm{grad} \rho)
\end{equation}
for the known vector-valued function $\boldsymbol{F}$.
An example is {\tt Darcy's Law} for the ideal gas flow,
\begin{equation}
\label{darcy}
\boldsymbol{v}=-\frac{\nu}{\mu}\,\mathrm{grad}\, p,
\end{equation}
where $p=p_0\rho^{\alpha}$, $\alpha>0$,
 is the pressure, $\mu$ is the permeability of the medium, and
 $\nu$ is the viscosity of the gas.

Assume that
 $\boldsymbol{F}(t,x,\rho, \mathrm{grad}\, \rho)=-q(\rho)\mathrm{grad}
\rho-\boldsymbol{a}\psi(\rho)$ for some known functions
$q$ and $\psi$ and a constant vector $\boldsymbol{a}$.
Substituting \eqref{intr4} into \eqref{intr3}, we then get
 \begin{equation}
 \label{intr5}
\kappa \, \rho_t
=\Laplace \Phi(\rho) + \boldsymbol{a}\cdot \mathrm{grad} \Psi(\rho)
+\sigma(t,x,\rho),
\end{equation}
where $\Phi(x)=\int_0^xyq(y)dy$ and $\Psi(x)=x\psi(x)$.
Equation \eqref{intr5} appears in a variety of problems,
including population dynamics and  gas and water propagation.
The underlying physical setting usually requires the
solution $\rho$ of \eqref{intr5} to be non-negative for
all $t>0$ and $x\in \bR$.

The following  two particular cases of \eqref{intr5} are
 worth mentioning:
\begin{enumerate}
\item  Darcy's Law for the ideal gas flow \eqref{darcy}
with constant $\mu,\nu, \kappa$ and  with $\sigma=0$
results in equation \eqref{intr00} with $m=1+\alpha$ for
the  normalized (dimensionless) and suitably
scaled density $u$ of the gas. The main difference between
\eqref{intr00} and  the heat equation ($m=1$)
is that initial  disturbance is propagated by \eqref{intr00}
with finite speed.
\item The same relation \eqref{darcy}
with constant $\mu,\nu$ and   with $\kappa=const$,
 $\sigma(t,x,\rho)=\sigma_0\,\rho$,
$\sigma_0=const.$ leads to a basic model for the
spread of a crowd-avoiding population:
\begin{equation}
\label{intr6}
u_t=\Laplace u^m +\sigma_0\, u,
\end{equation}
where $u$ is a suitably normalized and scaled density of the
population; see Gurtin and MacCamy \cite[Section 2]{Gurtin}.
The number $\sigma_0$ is the reproduction intensity of the
population; the population is birth-dominant if $\sigma_0>0$.
\end{enumerate}

{\em Comparing \eqref{intr0} and \eqref{intr6} we conclude that
\eqref{intr0} can describe the dynamics of  the
crowd-avoiding population with random time-dependent reproduction intensity;}
if $X(t)=\int_0^tg(s)ds+\int_0^tf(s)dw(s)$, then
\begin{equation}
\label{intr66}
\sigma_0=g(t) + f(t)\dot{w}(t),
\end{equation}
where $\dot{w}$ is Gaussian white noise.

In general, allowing the function $\sigma$ in \eqref{intr5}
to be random is  a natural way to introduce randomness in
the porous medium equation. If the positivity of the solution
is not guaranteed with these values of $\sigma$,
  the term $\Laplace u^m$ in the equation  is
 replaced by $\Laplace(|u|^{m-1}u)$. Similar to \eqref{intr0},
the randomness can be in time only, such as
  $\sigma(t,x,u)=u^{\beta}+u\dot{w}$ in Mel$'$nik \cite{Melnik1,
Melnik2}. The randomness can also be in time and space, such as
$\sigma(t,x,u)=\sum_{k}f_k(t,x)\dot{w}_k(t)$ in Kim \cite{Kim},
$\sigma(t,x,u)=F(u)+\dot{W}_Q(t,x)$ in Barbu et al. \cite{Barbu}
and in Da Prato \cite{DaPr}, or
$\sigma(t,x,u)=F(u)+G(u)\dot{W}_Q(t,x)$ in Barbu et al.
\cite{Barbu1}, with
$\dot{W}_Q$ representing Gaussian noise that is white in time but
is sufficiently regular in space. Note that, unlike \eqref{intr0},
none of the above models can benefit from Theorem \ref{th:intr1}.

The derivation of \eqref{intr5} shows that another
way to introduce randomness in the equation
is to allow velocity $\boldsymbol{v}$ to be random,
for example, by  considering a stochastic differential equation
satisfied by $\boldsymbol{v}$. Some results concerning the
resulting porous medium equation were recently  obtained by Sango
\cite{Sango}.

\section{Nonlinear Equations with Multiplicative Noise}
\label{sec:CV}

If $v=v(t,x)$, $t>0$, $x\in \bR^d$
 satisfies the heat equation $v_t=\Laplace v$, and $c$ is a real
 number, then the
 function $u(t,x)=v(t,x)e^{ct}$ satisfies $u_t=\Laplace u+cu$.
 Similarly, if $w=w(t)$ is a standard Brownian motion, then
 $u(t,x)=v(t,x)e^{w(t)-(t/2)}$ satisfies the stochastic \Ito
 equation $du=\Laplace u dt + udw(t)$. The objective of this
 section is to extend these results to some nonlinear equations.

Consider the equation
\begin{equation}
\label{d-nl1}
v_t=F(v,Dv,D^2v,\ldots), \ t>0,  \ x\in \bR^d,
\end{equation}
with some initial condition.
In \eqref{d-nl1}, $v=v(t,x)$ is the unknown function,
$F$ is a give  function,
  $v_t=\partial v/\partial t$ and $D^kv$ denotes a generic
$k$-th order partial derivative of $v$ with respect to $x$.

We also consider the stochastic counterpart of \eqref{d-nl1} for
  the unknown random field $u=u(t,x)$,
 $t>0$, $x\in \bR^d$:
\begin{equation}
\label{snl1}
du=F(u,Du,D^2u,\ldots)dt+u\,(f(t)dw(t)+g(t)dt),
\end{equation}
with the same initial  condition as in \eqref{d-nl1}.
In \eqref{snl1},  $f=f(t)$ is a locally square-integrable deterministic
function, $g$ is a locally integrable deterministic
function,   $w$ is a standard Brownian motion on a stochastic
basis $\mathbb{F}=(\Omega, \cF, \{\cF_t\}_{t\geq 0}, \mathbb{P})$,
and the equation is in the \Ito sense.
We assume that $\mathbb{F}$ satisfies the usual conditions of
completeness of $\cF$ and right continuity of $\cF_t$.

\begin{definition}
Given a stopping time $\tau$,
a  {\tt classical solution} of equation \eqref{snl1}
on the set $\oppr 0,\tau \clbr =\{(t,\omega): t\leq \tau\}$
is a random field $u=u(t,x)$ with the following
properties:
\begin{enumerate}
\item $u$ is continuous in $(t,x)$ for all  $(t,\omega)\in \oppr 0,\tau \clbr$
and $x \in \bR^d$;
\item all the necessary partial derivatives of $u$ with respect to
$x$ exist and are continuous in $(t,x)$ for all
$(t,\omega)\in \oppr 0,\tau \clbr$ and $x \in \bR^d$;
\item The equality
$$
u(t,x)=u(0,x)+\int_0^tF(u(s,x), Du(s,x),\ldots)ds+
\int_0^tu(s,x)(f(s)dw(s)+g(s)ds)
$$
for all $(t,\omega)\in \oppr 0,\tau \clbr$ and $x \in \bR^d$.
 \end{enumerate}
 \end{definition}
Taking in the above definition $f=g=0$, we get the
definition of the classical solution of equation \eqref{d-nl1}.
It turns out that if the function $F$ is homogeneous,
then  there is a one-to-one correspondence
between the classical solutions of \eqref{d-nl1} and \eqref{snl1}.
The key component in this correspondence is a random time change.

\begin{definition}
(a)  We say that the function $F$ is {\tt homogeneous}
{\tt of degree}
$\gamma\geq 1$ if, for every $\lambda>0$,
\begin{equation}
\label{homog}
F(\lambda x, \lambda {y},\lambda {z},\ldots)=\lambda^{\gamma}
F(x,{y},{z},\ldots).
\end{equation}
(b) We say that equation \eqref{d-nl1} is {\tt homogeneous of degree
$\gamma\geq 1$}  if the function $F$ is homogeneous of degree
$\gamma$.
\end{definition}

 \begin{proposition}
 \label{prop:snl1}
 Assume that the function $F$ is homogeneous of degree
 $\gamma$. Define the functions
\begin{equation}
\label{snl2}
h(t)=\exp\left(\int_0^tg(s)ds+
\int_0^tf(s)dw(s)-\frac{1}{2}\int_0^tf^2(s)ds\right),\
 H_{\gamma}(t)=\int_0^th^{\gamma-1}(s)ds.
 \end{equation}
 Then a function  $v=v(t,x)$ is a classical
solution of \eqref{d-nl1} if and only if
\begin{equation}
\label{main}
u(t,x)=v(H_{\gamma}(t),x)h(t)
\end{equation}
 is a  classical solution of \eqref{snl1}.
 \end{proposition}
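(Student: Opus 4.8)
The strategy is to read \eqref{main} as a random time change $t\mapsto H_{\gamma}(t)$ applied to $v$ (separately for each $x$), followed by multiplication by the positive factor $h$, and to establish both implications by essentially one \Ito computation carried out in two directions. The whole argument rests on three facts: the law of motion of $h$, the behaviour of $F$ under scaling, and the vanishing of a quadratic covariation. I would establish these first and then read off both directions.

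First I would compute $dh$. Writing $h=\exp(A)$ with $A(t)=\int_0^t g\,ds+\int_0^t f\,dw-\tfrac12\int_0^t f^2\,ds$ and applying \Ito's formula to the exponential, the second-order term $\tfrac12 h\,d\langle A\rangle=\tfrac12 h f^2\,dt$ exactly cancels the $-\tfrac12 f^2$ drift inside $A$, leaving the linear equation $dh=h\,(g\,dt+f\,dw)$. In particular $h>0$ and $h(0)=1$, so $H_{\gamma}$ is continuous, strictly increasing (since $h^{\gamma-1}>0$), absolutely continuous with $dH_{\gamma}=h^{\gamma-1}\,dt$, and $H_{\gamma}(0)=0$; thus $H_{\gamma}$ is a genuine time change (continuous, strictly increasing, with continuous inverse). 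Second, because $h$ does not depend on $x$, every spatial derivative obeys $D^k u(t,x)=h(t)\,D^k v(H_{\gamma}(t),x)$, so the argument list of $F$ for $u$ is that for $v$ scaled by $h$, and homogeneity of degree $\gamma$ gives $F(u,Du,\ldots)=h^{\gamma}\,F(v,Dv,\ldots)\big|_{(H_{\gamma}(t),x)}$.

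For the forward implication I assume $v$ is a classical solution of \eqref{d-nl1}, so $v$ is $C^1$ in its time variable with $v_t=F(v,Dv,\ldots)$. Setting $V(t,x)=v(H_{\gamma}(t),x)$, the chain rule and absolute continuity of $H_{\gamma}$ give $dV=v_t(H_{\gamma}(t),x)\,h^{\gamma-1}(t)\,dt$, a finite-variation process in $t$. Hence in the integration-by-parts formula for $u=Vh$ the covariation term $d\langle V,h\rangle$ vanishes, and substituting $dh=h(g\,dt+f\,dw)$ together with the homogeneity identity yields
\begin{equation*}
du=h^{\gamma}\,F(v,Dv,\ldots)\big|_{(H_{\gamma}(t),x)}\,dt+u\,(f\,dw+g\,dt)=F(u,Du,\ldots)\,dt+u\,(f\,dw+g\,dt),
\end{equation*}
which is \eqref{snl1}, with matching initial data since $u(0,x)=v(0,x)$. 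For the converse I assume $u=v(H_{\gamma}(t),x)h(t)$ solves \eqref{snl1} and run the computation in reverse on $V(t,x)=u(t,x)/h(t)$: \Ito's formula for $u\cdot h^{-1}$, using $d\langle u,h^{-1}\rangle$ to cancel the surviving $f^2$ and $f\,dw$ contributions, collapses to $dV=h^{-1}F(u,Du,\ldots)\,dt=h^{\gamma-1}F(V,DV,\ldots)\,dt$. This shows $V$ is $C^1$ in $t$, and the inverse time change $s=H_{\gamma}(t)$, under which $\partial_s=h^{-(\gamma-1)}\partial_t$, turns this into $v_s=F(v,Dv,\ldots)$, i.e. \eqref{d-nl1}. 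Continuity and the existence of the spatial derivatives transfer in both directions because $h$ is continuous and strictly positive and $H_{\gamma}$ is a $C^1$ increasing bijection with continuous inverse.

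I expect the only delicate points to be bookkeeping rather than conceptual. The first is justifying $d\langle V,h\rangle=0$ in the forward direction — that the $x$-frozen time change $V=v(H_{\gamma}(\cdot),x)$ is of finite variation, so no \Ito cross term survives — and, symmetrically, identifying the covariation $d\langle u,h^{-1}\rangle$ that makes the converse collapse to a pure drift. The second is checking that the pointwise-in-$t$ identity $v_s=F$ obtained at the random argument $s=H_{\gamma}(t)$ genuinely holds for every $s$ in the time interval, which is where strict monotonicity and continuity of $H_{\gamma}$ (i.e. $h^{\gamma-1}>0$, from $h>0$) are essential.
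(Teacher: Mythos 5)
Your proposal is correct and follows essentially the same route as the paper's proof: the forward direction via the \Ito product formula applied to $u=v(H_{\gamma}(t),x)h(t)$ (the cross-variation vanishing because the time-changed $v$ has finite variation) together with homogeneity of $F$ under the positive factor $h$, and the converse by multiplying by $z=1/h$ (whose dynamics $dz=z(-g\,dt-f\,dw+f^{2}\,dt)$ produce the cancelling covariation term) and inverting the strictly increasing time change $H_{\gamma}$. You in fact supply more detail than the paper, which compresses the converse into a single appeal to the \Ito formula.
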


 \begin{proof} Assume that
  $v$ is a classical solution of  \eqref{d-nl1}. Note that
  $H_{\gamma}'(t)=h^{\gamma-1}(t)$ and
 \begin{equation}
 \label{sde}
 dh(t)=h(t)(f(t)dw(t)+g(t)dt).
 \end{equation}
By the \Ito formula,
$$
du(t,x)=v_t(H_{\gamma}(t),x)h^{\gamma}(t)dt+
u(t,x)(f(t)dw(t)+g(t)dt).
$$
Using \eqref{d-nl1} and homogeneity of $F$,
\begin{equation*}
\begin{split}
v_t(H_{\gamma}(t),x)h^{\gamma}(t)&=
F\Big(h(t)v(H_{\gamma}(t),x),h(t)Dv(H_{\gamma}(t),x),
h(t)D^2v(H_{\gamma}(t),x),\ldots\Big)\\
&=
F(u,Du,D^2u,\ldots),
\end{split}
\end{equation*}
 and therefore $u$ is a classical solution of \eqref{snl1}.

Conversely, assume that $u$ is a classical solution of
\eqref{snl1}. Since $h(t)>0$ for all $t,\omega$,
 the function $H_{\gamma}$
is strictly increasing and  has an inverse function
$R_{\gamma}$.  Define $v(t,x)=z(R_{\gamma}(t))u(R_{\gamma}(t),x)$,
where $z(t)=1/h(t)$. Note that
$$
dz(t)=z(t)\Big(-g(t)dt-f(t)dw(t)+f^2(t)dt\Big).
$$
Then, by the \Ito formula, we conclude that $v$ is a
classical solution of \eqref{d-nl1}.
 \end{proof}

 As a simple illustration of Proposition \ref{prop:snl1}
 consider Burger's equation $v_t=vv_x, \ t>0$ with
 initial condition $v(0,x)=-x$. This
 equation is homogeneous of degree $2$ and has a classical
 solution $v(t,x)=-x/(1+t)$. Then the stochastic equation
 $du=uu_xdt+udw(t)$ with the same initial condition has a solution
 $$
 u(t,x)=-x\,\frac{e^{w(t)-(t/2)}}{1+\int_0^te^{w(s)-(s/2)}ds}.
 $$

 Proposition \ref{prop:snl1} can be generalized in
 several directions:
 \begin{enumerate}
 \item  The functions $f,g$ can be random and adapted.
 In fact, the process $\int_0^tg(s)ds+\int_0^tf(s)dw(s)$
 can be replaced with a semi-martingale $X(t)$, possibly
 with jumps. Then $h(t)$ becomes the stochastic (Dolean)
 exponential of $X$; see Liptser and Shiryaev
 \cite[Section 2.4]{LSh.m}.
 \item Other types of stochastic
 integral and other types
 of random perturbation can be considered, as long as
 the corresponding analog of equation \eqref{sde} can be
 solved and an analogue of the \Ito formula applied.
 For example, consider the fractional Brownian motion
 $B^H$ and take $f=1$, $g=0$. With
  a suitable interpretation of the integral
 $udB^H$ we get $h(t)=e^{B^H(t)-(1/2)t^{2H}}$;
 see, for example, Nualart
 \cite[Chapter 5]{Nualart}.
 \item Since transformation
 \eqref{main} does not involve the space variable $x$,
  Proposition \ref{prop:snl1} also works
   for initial-boundary value problems.
   \item Transformation \eqref{main} can establish a
   one-to-one correspondence between generalized
   (and even viscosity) solutions
   of \eqref{d-nl1} and \eqref{snl1}, but the precise definition of
   the solution and the corresponding arguments in the proof
    require more information about the function $F$.
\end{enumerate}

Let us emphasize that transformation \eqref{main}  does not lead to
a closed-form  solution of the stochastic equation \eqref{snl1}
unless there is a closed-form  solution of the deterministic equation
\eqref{d-nl1}. Some methods of finding closed-form solutions
of nonlinear equations of the type \eqref{d-nl1} are described
in \cite{Estevez}.

\section{Stochastic Porous Medium Equation}
\label{sec:SPME}

Recall that the classical porous medium equation is
\begin{equation}
\label{pme}
U_t(t,x)=\Laplace(U^m(t,x)),\ t>0,
\end{equation}
where $U_t=\partial U/\partial t$ and
$\Laplace$ is the Laplace operator.
This equation can model various physical phenomena for
every $m>0$; in what follows, we will consider only
$m>1$ ($m=1$ is the heat equation). We also assume that
$x\in \bR^d$ so that there are no boundary conditions.
 Note that, without
special restrictions on $m$, the definition of the
solution of \eqref{pme} must include a certain non-negativity
condition on $U$; this condition is also consistent with
the physical interpretation of the solution as a density
of some matter.

The {\tt scaled pressure} $V=V(t,x)$ corresponding to the porous
medium equation \eqref{pme} is defined by
\begin{equation}
\label{pressure}
V(t,x)=\frac{m}{m-1}U^{m-1}(t,x)
\end{equation}
and satisfies
\begin{equation}
\label{pressure1}
V_t=(m-1)V\Laplace V+|\nabla V|^2,
\end{equation}
where $\nabla$ is the gradient.
The function $V$ is extensively used in the study of the
analytic properties of \eqref{pme}.

Let $\mathbb{F}=(\Omega, \cF, \{\cF_t\}_{t\geq 0}, \mathbb{P})$ be a
stochastic basis with the usual assumptions,
$w=w(t)$, a standard Brownian motion on $\mathbb{F}$, and
 $\tau>0$, a stopping time.  Let $f=f(t)$ and $g=g(t)$ be
non-random functions such that $f$ is
locally square-integrable  and
$g$ is locally integrable.

Consider the following equation:
\begin{equation}
\label{spm}
du(t,x)=\Laplace(u^m(t,x))dt+u(t,x)(f(t)dw(t)+g(t)dt),
\ t>0,\ x\in \bR^d.
\end{equation}

\begin{definition} A non-negative,
continuous random field $u=u(t,x)$ is
called a {\tt solution} of equation \eqref{spm}
on the set $\oppr 0,\tau \clbr =\{(t\,\omega): t\leq \tau\}$
if, for every smooth compactly supported function $\varphi=
\varphi(x)$ the following equality holds
for all $(t,\omega)\in\oppr 0,\tau \clbr$:
\begin{equation}
\label{spm-sol}
(u,\varphi)(t)=(u,\varphi)(0)+
\int_0^t (u^m,\Laplace \varphi)(s)ds+\int_0^t (u,\varphi)(s)
(f(s)dw(s)+g(s)ds),
\end{equation}
where
$$
(u,\varphi)(t)=\int_{\bR^d}u(t,x)\varphi(x)dx.
$$
\end{definition}

Note that, with $f(t)=g(t)=0$,  this definition also applies to
the deterministic equation \eqref{pme}.

Define the functions
\begin{equation}
\label{spmH}
\begin{split}
h(t)&=\exp\left(\int_0^tg(s)ds +
\int_0^tf(s)dw(s)-\frac{1}{2}\int_0^tf^2(s)ds\right),\\
\  H(t)&=\int_0^th^{m-1}(s)ds.
\end{split}
 \end{equation}

\begin{proposition}
\label{prop:spm-main}
There is a one-to-one correspondence between the solutions of
\eqref{pme} and \eqref{spm} given by
\begin{equation}
\label{pme-spm}
u(t,x)=U(H(t),x)h(t).
\end{equation}
\end{proposition}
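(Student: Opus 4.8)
The plan is to recognize this as the weak-solution analogue of Proposition \ref{prop:snl1}: the nonlinearity $F(u)=\Laplace(u^m)$ is homogeneous of degree $\gamma=m>1$, and \eqref{spmH} is exactly \eqref{snl2} with $\gamma=m$. The correspondence \eqref{pme-spm} is a genuine bijection because $R:=H^{-1}$ satisfies $R(H(t))=t$ (as $h>0$ makes $H$ strictly increasing and continuous) and $z:=1/h$ satisfies $zh=1$; since $h,z>0$ and are continuous, non-negativity and continuity of the random field are preserved in both directions. Rather than differentiate the random field pointwise, I would test against a fixed smooth compactly supported $\varphi$ and work entirely with the scalar processes $(u,\varphi)(t)$ and $(U,\varphi)(t)$, to which the \Ito formula applies directly.

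Forward direction. Assume $U$ solves \eqref{pme}. Set $A(t):=(U,\varphi)(t)$; the deterministic weak form gives that $A$ is absolutely continuous with $A'(t)=(U^m,\Laplace\varphi)(t)$. By \eqref{pme-spm}, $(u,\varphi)(t)=h(t)A(H(t))$. I would apply the \Ito product rule using $dh(t)=h(t)(f(t)\,dw(t)+g(t)\,dt)$ from \eqref{sde} and $d[A(H(t))]=A'(H(t))\,h^{m-1}(t)\,dt$; the cross-variation term vanishes because $t\mapsto A(H(t))$ has finite variation. Collecting terms and using the bookkeeping $u^m(t,x)=U^m(H(t),x)h^m(t)$ together with $h(t)\cdot h^{m-1}(t)=h^m(t)$, one gets
$$d(u,\varphi)(t)=(u^m,\Laplace\varphi)(t)\,dt+(u,\varphi)(t)\bigl(f(t)\,dw(t)+g(t)\,dt\bigr),$$
which integrates to \eqref{spm-sol}.

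Converse direction. Assume $u$ solves \eqref{spm} and define $U(t,x):=z(R(t))\,u(R(t),x)$. To avoid time-changing a stochastic integral, I would work first in the stochastic time variable $s$ and compute $d[z(s)(u,\varphi)(s)]$ by the \Ito product rule, using $dz(s)=z(s)\bigl(-g(s)\,ds-f(s)\,dw(s)+f^2(s)\,ds\bigr)$ and the semimartingale form of $(u,\varphi)(s)$ from \eqref{spm-sol}. The two martingale parts cancel and the cross-variation cancels the spurious $f^2$ drift, leaving
$$d\bigl[z(s)(u,\varphi)(s)\bigr]=z(s)\,(u^m,\Laplace\varphi)(s)\,ds.$$
Hence, for almost every $\omega$, the map $s\mapsto z(s)(u,\varphi)(s)=(U,\varphi)(H(s))$ is absolutely continuous. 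I would then change variables $T=H(s)$ for fixed $\omega$, using $dT=h^{m-1}(s)\,ds$, the identity $z(s)/h^{m-1}(s)=z^m(s)$, and $(U^m,\Laplace\varphi)(H(s))=z^m(s)(u^m,\Laplace\varphi)(s)$ (from $U(H(s),x)=z(s)u(s,x)$), to obtain $\tfrac{d}{dT}(U,\varphi)(T)=(U^m,\Laplace\varphi)(T)$, i.e. $U$ solves \eqref{pme}.

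The hard part will be the random time change in the converse: a direct substitution $s\mapsto R(t)$ inside the \Ito integral is illegitimate, so the reformulation in the stochastic time $s$ --- where the martingale contribution is explicit and cancels cleanly --- is essential, after which the time change degenerates to an ordinary change of variables in an absolutely continuous function for each fixed $\omega$. The remaining care is routine but must be checked: matching the time horizons (the stochastic solution on $\oppr 0,\tau\clbr$ corresponds to the deterministic solution on $[0,H(\tau)]$), verifying the exponent bookkeeping $u^m=U^m h^m$ and $z\cdot z^{m-1}=z^m$, and fixing a single $\mathbb{P}$-null set off which all the identities hold for the given $\varphi$.
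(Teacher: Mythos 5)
Your proposal is correct and takes essentially the same approach as the paper: the paper's proof simply observes that \eqref{pme} is homogeneous of degree $m$ and asserts that the argument is the \Ito-formula computation of Proposition \ref{prop:snl1} carried out in the integral relation \eqref{spm-sol}, which is precisely what you execute. You merely supply the details the paper leaves as ``completely analogous'' --- testing against a fixed $\varphi$, the cancellation of the martingale and $f^2$ terms in stochastic time, and the pathwise change of variables $T=H(s)$ in the converse direction.
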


\begin{proof} Note that the porous medium equation
\eqref{pme} is homogeneous of degree $m$. Then \eqref{pme-spm}
is strongly suggested by Proposition \ref{prop:snl1}.
Since the solutions in question are not necessarily
classical, the formal argument involves application of the
\Ito formula in the integral relation \eqref{spm-sol}, but
 is completely analogous to the proof of
Proposition \ref{prop:snl1}.
\end{proof}

 Two immediate consequences of \eqref{pme-spm}
 are the comparison principle and maximum principle for
 equation \eqref{spm}; both follow from the corresponding
 results for the deterministic equation \eqref{pme},
 see the book by V{\'a}zquez \cite[Theorem 9.2]{Vazquez}.

\begin{corollary}
\label{cor:comp}
(a) {\tt Comparison principle:} If $u$, $\widetilde{u}$
are two solutions of \eqref{spm} and $u(0,x)\leq \widetilde{u}(0,x)$
for all $x\in \bR^d$, then $u(t,x)\leq \widetilde{u}(t,x)$ for all
$(t,\omega)\in \oppr 0,\tau \clbr,\, x\in \bR^d$.\\
(b) {\tt Maximum principle:} If $u$ is a solution of \eqref{spm}
and $0\leq u(0,x)\leq M$ for all $x\in \bR^d$, then
$0\leq u(t,x)\leq Mh(t)$ for all
$(t,\omega)\in \oppr 0,\tau \clbr,\, x\in \bR^d$.
\end{corollary}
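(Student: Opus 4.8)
The plan is to transfer both principles from the deterministic porous medium equation \eqref{pme} to the stochastic equation \eqref{spm} through the correspondence \eqref{pme-spm} of Proposition \ref{prop:spm-main}, working pathwise in $\omega$. Two structural facts make this succeed: $h(t)>0$ for all $t$, so that multiplication by $h(t)$ preserves inequalities, and $H(t)\geq 0$, so that evaluating a deterministic solution at the (random) argument $H(t)$ is always legitimate. At the outset I would record that $h(0)=1$ and $H(0)=0$, both immediate from \eqref{spmH}, so that initial data are left unchanged by the transformation \eqref{pme-spm}.

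For part (a), I would fix $\omega$ in the probability-one event on which $h,H$ are well defined with $h>0$. Given two solutions $u,\widetilde{u}$ of \eqref{spm}, Proposition \ref{prop:spm-main} furnishes solutions $U,\widetilde{U}$ of the deterministic equation \eqref{pme} with $u(t,x)=U(H(t),x)h(t)$ and $\widetilde{u}(t,x)=\widetilde{U}(H(t),x)h(t)$. Evaluating at $t=0$ and using $h(0)=1$, $H(0)=0$ gives $U(0,x)=u(0,x)\leq\widetilde{u}(0,x)=\widetilde{U}(0,x)$. The deterministic comparison principle (V{\'a}zquez \cite[Theorem 9.2]{Vazquez}) then yields $U(s,x)\leq\widetilde{U}(s,x)$ for every $s\geq 0$ and $x\in\bR^d$; specializing to $s=H(t)$ and multiplying by $h(t)>0$ gives $u(t,x)\leq\widetilde{u}(t,x)$, as required.

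For part (b), I would argue identically, comparing the solution $u$ against the two constant functions $0$ and $M$, each of which is a stationary solution of \eqref{pme} in the weak sense of \eqref{spm-sol} with $f=g=0$, since $\Laplace(c^m)=0$ for any constant $c$ makes the drift integral vanish and the compact support of $\varphi$ keeps all pairings finite. Writing $u(t,x)=U(H(t),x)h(t)$ with $U$ solving \eqref{pme} and $0\leq U(0,x)\leq M$, the deterministic comparison principle gives $0\leq U(s,x)\leq M$ for all $s\geq 0$; taking $s=H(t)$ and multiplying by $h(t)>0$ produces $0\leq u(t,x)\leq Mh(t)$.

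The main obstacle I anticipate lies not in the algebra but in the pathwise bookkeeping: one must confirm that the correspondence of Proposition \ref{prop:spm-main} may be invoked for almost every fixed $\omega$, so that the associated deterministic fields $U,\widetilde{U}$ genuinely solve \eqref{pme} in the class to which V{\'a}zquez's comparison theorem applies, and that the random argument $H(t,\omega)$ ranges over exactly the times for which that theorem is valid. Because $H$ is strictly increasing and non-negative while the deterministic inequality holds simultaneously for all $s\geq 0$, substituting the random time $H(t,\omega)$ raises no measurability or range difficulty; the only point genuinely requiring care is verifying that the constant comparison functions in (b) are admissible in the weak formulation \eqref{spm-sol}, which I expect to follow directly from the compact support of the test function $\varphi$.
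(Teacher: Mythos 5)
Your proposal is correct and follows the paper's own route: the paper likewise deduces both principles directly from the correspondence \eqref{pme-spm} of Proposition \ref{prop:spm-main} combined with the deterministic comparison result of V\'azquez, using that $h(0)=1$, $H(0)=0$, and $h(t)>0$. Your pathwise fleshing-out (including treating the constants $0$ and $M$ as weak solutions of \eqref{pme}) simply fills in details the paper leaves implicit.
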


\begin{remark}
1. In the particular case $f=0$, $g=const.$ the relation
\eqref{pme-spm} was discovered
 by Gurtin and MacCamy \cite{Gurtin}.

 2. While the current paper deals only with non-negative
 solutions of the porous medium equation,
  relation \eqref{pme-spm}
 also holds for all the solutions of
 $U_t=\Laplace(|U|^{m-1}U)$ and $du=\Laplace(|u|^{m-1}u)dt+udw(t)$.
 \end{remark}

It is important to have the definition of the
solution of \eqref{spm} on a random time interval,
because  even a classical solution of \eqref{pme} can
blow up in finite time, and then the corresponding
 solution of \eqref{spm} will blow up in random time.
The {\tt quadratic pressure} solution provides an example.
By direct computation, the function $U^{\rm [qp]}$ defined by
\begin{equation}
\label{qps}
U^{\rm [qp]}(t,x)=\left(\frac{t_1 |x|^2}{t_q-t}\right)^{1/(m-1)},
\ t_q=\frac{m-1}{2mq(2+d(m-1))},\ q>0; \ t_1=t_q|_{q=1},
\end{equation}
is a classical solution of \eqref{pme} for
$(t,x)\in (0,t_q)\times \bR^d$; $U^{\rm [qp]}$ is known as the
{\tt quadratic pressure solution} because the corresponding pressure
is
$$
V^{\rm [qp]}(t,x)=\frac{mt_1|x|^2}{(m-1)(t-t_q)};
$$
for details, see Aronson \cite[pages 3--5]{Aronson} or
V{\'a}zquez \cite[Section 4.5]{Vazquez}.
By Proposition \ref{prop:snl1},
$$
u^{\rm [qp]}(t,x) =U^{\rm [qp]}(H(t),x)h(t)
$$
is a classical solution of \eqref{spm} on
$\oppr 0,\tau_q\clbr$, where
$$
\tau_q=\inf(t>0: H(t)=t_q)
$$
is the blow-up time;
it is certainly possible to have $\tau_q<\infty$ with positive
probability.
On the other hand, if $f=0$, then, with a suitable
choice of $g$, the life of every non-global solution
of \eqref{pme} can be extended
indefinitely: note that
 $g(t)=-\alpha<0$ corresponds to  $H(t)=(1-e^{-\alpha t})/\alpha
<1/\alpha$, and it remains to take $\alpha>0$ sufficiently large.

The following is the main result about global existence, uniqueness,
and regularity of the solution of \eqref{spm}.

\begin{theorem}
\label{th:spme-main}
Assume that the initial condition $u(0,x)$ is non-random and
has the following properties:
\begin{enumerate}
\item non-negative and bounded: $0\leq u(0,x)\leq C$, $x\in \bR^d$;
\item continuous;
\item integrable: $\int\limits_{\bR^d} u(0,x)dx=M,\ 0<M<\infty$;
\item square-integrable: $\int\limits_{\bR^d} u^2(0,x)dx<\infty$.
\end{enumerate}
Then there exists a unique non-negative
 solution $u=u(t,x)$ of \eqref{spm}.
This solution is defined for all $t>0$, $x\in \bR^d$ and
has the following properties:
\begin{enumerate}
\item the mean total mass satisfies
$\bE\int\limits_{\bR^d} u(t,x)dx=Me^{\int_0^tg(s)ds}$, $t>0$;
\item $u(t,\cdot)$ is \Holder continuous (as a function of $x$)
 for every $t>0$;
 \item if $\left(\int_{\bR^d}u^p(0,x)dx\right)^{1/p}=M_p<\infty$, then
 \begin{equation}
 \label{Lp-norm}
 \left(\bE \int_{\bR^d}u^p(t,x)dx\right)^{1/p} \leq
 M_p\exp\left(\int_0^tg(s)ds+\frac{(p-1)}{2}\int_0^tf^2(s)ds\right).
 \end{equation}
\end{enumerate}
In addition, if the functions $f,g$ are
locally bounded, then the function $u$ is \Holder
continuous on $[T,+\infty)\times\bR^d$ for every $T>0$.
\end{theorem}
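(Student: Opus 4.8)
The plan is to use the one-to-one correspondence of Proposition \ref{prop:spm-main} to import each assertion from the deterministic theory of the porous medium equation, as collected in Aronson \cite{Aronson} and V\'azquez \cite{Vazquez}. The starting point is that, under hypotheses (1)--(4), the Cauchy problem for \eqref{pme} with initial data $U(0,x)=u(0,x)$ has a unique non-negative solution $U$ defined for all $t>0$. Setting $u(t,x)=U(H(t),x)h(t)$ then produces a solution of \eqref{spm}: it is non-negative because $U\geq 0$ and $h>0$, and continuous because $U$, $H$, and $h$ are all continuous; since $h(t)>0$ for every $t$, the time change $H(t)=\int_0^t h^{m-1}(s)\,ds$ is finite for every finite $t$ and the solution is global. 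Uniqueness transfers directly: any two solutions of \eqref{spm} would, by Proposition \ref{prop:spm-main}, correspond to two solutions of \eqref{pme} with the same initial data, contradicting deterministic uniqueness.

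I would deduce the conservation and decay statements by combining the analogous deterministic facts with moment computations for $h$. For (1), mass is conserved for \eqref{pme}, so $\int_{\bR^d} U(H(t),x)\,dx=M$ for every $t$, whence $\int_{\bR^d}u(t,x)\,dx=M\,h(t)$ pathwise; taking expectations and using that the stochastic exponential $\exp\big(\int_0^t f\,dw-\tfrac12\int_0^t f^2\,ds\big)$ is a martingale of mean one (valid since $f$ is deterministic and locally square-integrable) gives $\bE h(t)=e^{\int_0^t g\,ds}$ and hence the claimed mean mass. For (3), the deterministic $L^p$ norms are non-increasing, so $\int_{\bR^d}U^p(H(t),x)\,dx\leq M_p^p$ and $\big(\int_{\bR^d}u^p(t,x)\,dx\big)^{1/p}\leq M_p\,h(t)$; raising to the $p$-th power, taking expectations, and computing $\bE h^p(t)=\exp\big(p\int_0^t g\,ds+\tfrac{p(p-1)}2\int_0^t f^2\,ds\big)$ (again via the martingale property, now for $\exp(p\int_0^t f\,dw-\tfrac{p^2}2\int_0^t f^2\,ds)$) yields \eqref{Lp-norm}.

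For the regularity statements I would again invoke the deterministic theory. The spatial \Holder continuity in (2) holds because, for each fixed $t>0$, one has $H(t)>0$, so $U(H(t),\cdot)$ is a \Holder continuous function of $x$ by the regularity theory for \eqref{pme}, and multiplication by the scalar $h(t)$ preserves this. For the final claim, I would fix $T>0$ and work pathwise off a null set. Since $f,g$ are locally bounded, $h$ is continuous and strictly positive, so on any $[T,S]$ it is bounded above and bounded away from zero; consequently $H$ is bi-Lipschitz there, and $H(T)>0$. The deterministic solution is jointly \Holder on $[H(T),+\infty)\times\bR^d$; composing with the locally Lipschitz map $t\mapsto H(t)$ preserves joint \Holder continuity, and multiplying by $h(t)$---which is itself locally \Holder in $t$ because the Gaussian martingale $\int_0^t f\,dw$ inherits the \Holder regularity of Brownian motion---keeps the product \Holder, the resulting exponent being the smaller of the two.

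The step I expect to be the main obstacle is this last one. The difficulty is not any single estimate but the bookkeeping forced by the random time change: the image of $[T,+\infty)$ under $H$ is a random interval whose left endpoint $H(T)$ is positive but not bounded below uniformly in $\omega$, so the deterministic \Holder constant on $[H(T),+\infty)\times\bR^d$ becomes an $\omega$-dependent random variable, and one must verify that it is finite almost surely. One must also reconcile the two sources of time regularity---the (essentially $1/2$) \Holder exponent of $h$ coming from the Brownian term against the exponent supplied by the deterministic parabolic theory---so that the product estimate is stated with the correct, smaller exponent. Everything else reduces to routine properties of geometric Brownian motion and to quoting the deterministic results verbatim.
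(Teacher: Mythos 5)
Your proposal is correct and follows essentially the same route as the paper: invoke the deterministic theory (Sabinina for existence, uniqueness, and mass conservation; Caffarelli--Friedman for \Holder continuity; the $L^p$ monotonicity from V\'azquez), transfer everything through Proposition \ref{prop:spm-main}, and compute $\bE h^p(t)=\exp\left(p\int_0^tg(s)ds+\frac{p(p-1)}{2}\int_0^tf^2(s)ds\right)$ together with the observation that $h$ is locally \Holder of any order less than $1/2$ when $f,g$ are locally bounded. The only difference is that you spell out the pathwise bookkeeping for the final joint-regularity claim (bi-Lipschitz time change, $\omega$-dependent \Holder constants), which the paper leaves implicit; since the assertion is pathwise, a random \Holder constant is harmless and your argument goes through.
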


\begin{proof}
For the deterministic equation, Sabinina \cite{Sabinina} proved
existence, uniqueness, and conservation  of mass (see also the
book by V{\'a}zquez
\cite[Chapter 9]{Vazquez}; Caffarelli and Friedman \cite{CafFr}
proved \Holder continuity. It is also  known \cite[Theorem
9.3]{Vazquez} that
$$
\int_{\bR^d}U^p(t,x)dx\leq \int_{\bR^d}U^p(0,x)dx,\ t>0,\, p>1,
$$
where $U$ is the solution of \eqref{pme}.
 It remains to use Proposition
\ref{prop:spm-main} and note that
$$
\bE h^p(t)=
\exp\left(p\int_0^tg(s)ds+\frac{p(p-1)}{2}\int_0^tf^2(s)ds\right),
\, t>0.
$$ Also, if the functions $f,g$ are locally bounded, then
the function $h$ is \Holder continuous of any order less than
$1/2$.

\end{proof}

Note that the initial condition of the
quadratic pressure solution,
$U^{\rm [qp]}(0,x)=q^{1/(m-1)}|x|^{2/(m-1)}$,
is not bounded. Theorem \ref{th:spme-main} shows that
a blow-up of the solution can be avoided with suitable
growth restrictions on the initial condition. These
restrictions are sufficient but not necessary: consider,
for example
the {\tt linear pressure solution}
$$
u^{\rm [lp]} (t,x)=\left(\frac{m-1}{m}\max
\big(H(t)+x,\,0\big)\right)^{1/(m-1)}h(t),\ t>0,\ x\in \bR.
$$

By analogy with \eqref{pressure},
define the {\tt scaled pressure} corresponding to
equation \eqref{spm} by
\begin{equation}
\label{spressure}
v(t,x)=\frac{m}{m-1} u^{m-1}(t,x).
\end{equation}
An application of the \Ito formula shows that $v$
satisfies
\begin{equation}
\begin{split}
dv&=\left((m-1)v\Laplace v+|\nabla v|^2+\frac{(m-1)(m-2)}{2}\, v f^2
\right)dt\\
&+(m-1)v\, (fdw+gdt).
\end{split}
\end{equation}

On the other hand, equation \eqref{pressure1} is
homogeneous of degree $2$, and so  Proposition
\ref{prop:snl1} suggests an alternative definition:
\begin{equation}
\label{eq:p3}
v(t,x)=V(\widetilde{H}(t),x)\widetilde{h}(t),
\end{equation}
where
\begin{equation*}
\begin{split}
\widetilde{h}(t)&=
\exp\left( (m-1)\int_0^tf(s)dw(s)-\frac{(m-1)^2}{2}
\int_0^tf^2(s)ds\right.\\
&+\left.\frac{(m-1)(m-2)}{2}\int_0^t f^2(s)ds
+(m-1)\int_0^tg(s)ds\right)
\end{split}
\end{equation*}
and $\widetilde{H}(t)=\int_0^t\widetilde{h}(s)ds$.
An observation that $\Big((m-1)(m-2)-(m-1)^2\Big)/2=-(m-1)/2$ shows
that, in fact, $\widetilde{H}(t)=H(t)$ and
$$
V(\widetilde{H}(t),x)\widetilde{h}(t)=\frac{m}{m-1}
U^{m-1} (H(t),x)h^{m-1}(t).
$$
In other words, \eqref{spressure} and \eqref{eq:p3} define the
same function. Another way to phrase this conclusion is
to say that,  for the porous medium equation
\eqref{pme}, the change of variables defined by
Proposition \ref{prop:snl1} commutes with the
transformation  $U\mapsto (m/(m-1))U^{m-1}$.

\section{Barenblatt's solution and long-time behavior}
\label{sec:LTB}

Barenblatt's solution of $U_t=\Laplace(U^m)$ is
\begin{equation}
\label{pme-bt}
U^{\rm [BT]} (t,x;b)=\frac{1}{t^{\alpha}}
\left(\max\left(
b-\frac{{m}-1}{2{m}}\,\beta\,\frac{|x|^2}{t^{2\beta}},\
0\right) \right)^{1/({m}-1)}, \ t>0,\ x\in \bR^d,
\end{equation}
where $b>0$ and
$$
\beta=\frac{1}{({m}-1)d+2},\  \alpha=\beta d.
$$
For the derivation of $U^{\rm [BT]}$ see
Aronson \cite[pages 3--4]{Aronson}, Evans
\cite[Section 4.2.2]{Evans}, or V{\'a}zquez
\cite[Section 4.4.2]{Vazquez}.
 The function has $U^{\rm [BT]}$ the following
properties:
\begin{enumerate}
\item the {\tt total mass} of the solution,
$\int\limits_{\bR^d} U^{\rm [BT]}(t,x;b)dx$, does not depend on $t$
and is uniquely determined by $b$: by direct computation,
\begin{equation}
\label{integral}
\int_{\bR^d} U^{\rm [BT]}(t,x;b)dx
=b^{1/(2\beta(m-1))}
 \left(\frac{{m}-1}{2\pi\,{m}}\,\beta\right)^{-d/2}\,
 \frac{\Gamma\left(\frac{m}{m-1}\right)}
 {\Gamma\left(\frac{m}{m-1}+\frac{d}{2}\right)},
\end{equation}
where $\Gamma$ is the Gamma function
(see also Aronson \cite[Pages 3--4]{Aronson}
and V{\'a}zquez \cite[Section 17.5]{Vazquez});
\item $U^{\rm [BT]}$ is  a classical solution of $U_t=\Laplace(U^m)$
 in the region
\begin{equation}
\label{inter}
\left\{(t,x): |x|\not=\sqrt\frac{2mb}{(m-1)\beta}\  t^{\beta}
\right\};
\end{equation}
\item For every $p,q,t_0>0$, $x_0\in \bR^d$, the function
\begin{equation}
\label{self-sim}
\tilde{U}(t,x)=\left(\frac{p}{q^2}\right)^{1/(m-1)}
U^{\rm [BT]}(pt+t_0,qx+x_0;b)
\end{equation}
is also a solution of \eqref{pme}.
\end{enumerate}
By Proposition \ref{prop:snl1}, the function
\begin{equation}
\label{s-bt}
u^{\rm [BT]}(t,x;b)=U^{\rm [BT]}(H(t),x;b)h(t),
\end{equation}
with $H,h$ given by \eqref{spmH}, is a solution
of the stochastic porous medium equation \eqref{spm}.
In particular,
$$
\bE\int\limits_{\bR^d} u^{\rm [BT]}(t,x;b)dx=\left(\ \int\limits_{\bR^d}
U^{\rm [BT]}(t,x;b)dx\right) e^{\int_0^tg(s)ds}.
$$

Barenblatt's solution $U^{\rm [BT]}$ determines
 the long-time behavior of
{\em every} global solution of the deterministic equation
\eqref{pme}.
Similarly, with  obvious restrictions on $H$,
$u^{\rm [BT]}$ determines the long-time
behavior of the solutions of the stochastic porous medium
equation from Theorem \ref{th:spme-main}.

\begin{theorem}
Assume that $\lim_{t\to infty} H(t)=+\infty$ with probability
one.
Let $u=u(t,x)$ be the solution of the stochastic porous medium
equation constructed under the assumptions of Theorem
 \ref{th:spme-main}. Then, for every $x\in \bR^d$,
 \begin{equation}
 \label{bt-lim1}
 \lim_{t\to \infty} (H(t))^{\beta d}|u(t,x)-u^{\rm [BT]}(t,x;b)|=0
 \end{equation}
 with probability one, where $b$ is such that
 \begin{equation}
\label{integral1}
\int_{\bR^d} u(0,x)dx
 =b^{1/(2\beta(m-1))}
 \left(\frac{{m}-1}{2\pi\,{m}}\,\beta\right)^{-d/2}\,
 \frac{\Gamma\left(\frac{m}{m-1}\right)}
 {\Gamma\left(\frac{m}{m-1}+\frac{d}{2}\right)}.
\end{equation}
 \end{theorem}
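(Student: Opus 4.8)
The plan is to transfer the statement to the deterministic porous medium equation through the correspondence of Proposition \ref{prop:spm-main}, i.e. \eqref{pme-spm}, and then quote the known deterministic asymptotics. Since $h(0)=1$ and $H(0)=0$, the initial data coincide, $u(0,x)=U(0,x)$, so $U$ and $U^{\rm [BT]}(\,\cdot\,;b)$ carry the same total mass $M$ once $b$ is fixed by \eqref{integral1} (compare the Barenblatt mass formula \eqref{integral}); this mass-matching is exactly the condition under which Barenblatt's profile governs the deterministic long-time behavior. Writing $u^{\rm [BT]}$ by \eqref{s-bt} and $u$ by \eqref{pme-spm}, both carry the common factor $h(t)$, and I would record the identity
$$(H(t))^{\beta d}\,\bigl|u(t,x)-u^{\rm [BT]}(t,x;b)\bigr| = h(t)\,(H(t))^{\beta d}\,\bigl|U(H(t),x)-U^{\rm [BT]}(H(t),x;b)\bigr|,$$
which reduces everything to the deterministic difference evaluated along the random clock $s=H(t)$.

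The key imported ingredient is the Friedman--Kamin long-time asymptotic for \eqref{pme}: for data of finite mass $M$ and the mass-matched Barenblatt profile one has $\lim_{s\to\infty} s^{\beta d}\,\|U(s,\cdot)-U^{\rm [BT]}(s,\cdot;b)\|_{L^\infty(\bR^d)}=0$ (see Aronson \cite{Aronson} or V{\'a}zquez \cite{Vazquez}); here $\alpha=\beta d$ is precisely the decay rate of $\|U^{\rm [BT]}(s,\cdot;b)\|_\infty$. Under the standing hypothesis $\lim_{t\to\infty}H(t)=+\infty$ with probability one, I would substitute $s=H(t)$ to conclude that, almost surely and for each fixed $x$, the bracketed factor $(H(t))^{\beta d}\,|U(H(t),x)-U^{\rm [BT]}(H(t),x;b)|$ tends to $0$.

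The main obstacle is the surviving prefactor $h(t)$: the displayed identity yields the claim at once only when $h$ stays bounded (for instance when $\int_0^t g$ is bounded above), whereas in general $h(t)$ may grow without bound even though the bracketed deterministic difference tends to $0$. To close this gap I would upgrade the deterministic input to a quantitative rate, passing to the self-similar variables $\tau=\log s$, $\xi=x\,s^{-\beta}$ in which the rescaled solution converges to the stationary Barenblatt profile at the rate dictated by the spectral gap of the linearized operator; this gives an algebraic bound $s^{\beta d}\,|U(s,x)-U^{\rm [BT]}(s,x;b)|\le C\,s^{-\lambda}$ for some $\lambda>0$. The remaining task is to verify that $h(t)\,(H(t))^{-\lambda}\to 0$ almost surely, which is where the relation $h(t)=\bigl(H'(t)\bigr)^{1/(m-1)}$ linking the scaling to the clock must be exploited to balance the growth of $h$ against the decay supplied by $\lambda$. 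I expect this balancing of the random amplitude $h$ against the deterministic convergence rate, rather than any purely stochastic-analytic difficulty, to be the crux of the argument.
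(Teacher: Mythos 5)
Your first two paragraphs are, word for word, the paper's entire proof: the author invokes the correspondence \eqref{pme-spm} of Proposition \ref{prop:spm-main} together with the Friedman--Kamin limit $\lim_{s\to\infty}s^{\beta d}|U(s,x)-U^{\rm [BT]}(s,x;b)|=0$ and stops there, never mentioning the surviving factor $h(t)$. So the obstacle you isolate in your third paragraph is not a defect of your write-up; it is a genuine gap in the paper's own argument, and spotting it is the most valuable part of your proposal. Your side remark is also accurate: when $h$ is a.s.\ bounded (e.g.\ $\int_0^t g(s)ds$ bounded above, since $\sup_t\exp\bigl(\int_0^tf\,dw-\tfrac12\int_0^tf^2ds\bigr)<\infty$ a.s.), the naive reduction does finish the proof.

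However, the repair you sketch cannot succeed, because the theorem's only hypothesis is $H(t)\to\infty$ a.s., and that is too weak for any balancing of $h$ against a deterministic rate. Take $f\equiv 0$ and $g\equiv c>0$, so $h(t)=e^{ct}$ and $H(t)=(e^{(m-1)ct}-1)/((m-1)c)$; then $h(t)(H(t))^{-\lambda}\to 0$ forces $\lambda>1/(m-1)$. But the deterministic rate is genuinely algebraic of order $2\beta$: for the admissible initial datum $u(0,x)=U^{\rm [BT]}(1,x-x_0;b)$ (continuous, bounded, compactly supported, mass matching \eqref{integral1}) the unique solution of \eqref{pme} is $U(s,x)=U^{\rm [BT]}(s+1,x-x_0;b)$, and since the profile $F(\xi)=\bigl(b-k|\xi|^2\bigr)_+^{1/(m-1)}$ has $F'(0)=0$ and $F''(0)\neq0$, a Taylor expansion at fixed $x$ gives
\begin{equation*}
s^{\beta d}\,\bigl|U(s,x)-U^{\rm [BT]}(s,x;b)\bigr|\;\sim\; c_2\,\bigl|\,|x_0|^2-2x\cdot x_0\,\bigr|\,s^{-2\beta},
\qquad s\to\infty,
\end{equation*}
with $c_2>0$, so the rate $\lambda=2\beta$ is sharp. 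Since $2\beta(m-1)=2(m-1)/((m-1)d+2)<1$ for every $d\ge 2$ (and for $d=1$ when $m<3$), the product $h(t)\,(H(t))^{\beta d}\,|U(H(t),x)-U^{\rm [BT]}(H(t),x;b)|$ diverges exponentially for generic $x$: the theorem as literally stated is \emph{false} for this (even non-random!) choice of $f,g$, and no sharpening of the Friedman--Kamin rate can ever complete your final step. What your displayed identity does prove is the normalized statement $\lim_{t\to\infty}(H(t))^{\beta d}\,h^{-1}(t)\,|u(t,x)-u^{\rm [BT]}(t,x;b)|=0$ a.s., equivalently the theorem as stated under the additional hypothesis $\sup_{t>0}h(t)<\infty$ a.s.\ (which holds, for instance, in the paper's subsequent theorem, where $h(t)\to e^{\xi}$). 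The honest conclusion is that the statement, not your strategy, needs amending: insert the factor $1/h(t)$, or add a boundedness assumption on $h$.
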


 \begin{proof}
 This follows from Proposition \ref{prop:snl1} and
 the corresponding result for the
 deterministic equation,
 $$
\lim_{t\to \infty} t^{\beta d}|U(t,x)-U^{\rm [BT]}(t,x;b)|=0
$$
(see Friedman and Kamin \cite{FK}).
\end{proof}

\begin{remark} Clearly, if $\lim_{t\to \infty} H(t)$ is
finite, then the long-time behavior of the solutions of
\eqref{spm} will be quite different and, in particular,
not as universal. By definition, the function $H$ is
non-decreasing and therefore has a limit (finite or
infinite) with probability one. If $\int_0^{\infty}
f^2(s)ds<\infty$, then $H(t)\uparrow +\infty$ as long as
$\liminf_{t\to \infty} \int_0^t g(s)ds > -\infty$. If
$\int_0^{\infty} f^2(t)dt=+\infty$, then, by the law of
iterated logarithm, $\lim_{t\to \infty}H(t)$
 can be finite with probability one,
and  more information about $f(t)$ and $g(t)$ is
necessary to proceed (see the example below).
\end{remark}

 Further information about the asymptotic behavior of the
 solution can be obtained under additional assumptions about
 the functions $f,g$.

\begin{theorem}
Assume that $\int_0^{\infty} f^2(t)dt=2\sigma^2$ and $\int_0^t
g(t)dt=\mu$ for some $\sigma, \mu\in \bR$. Then, for
every $x\in \bR^d$,
\begin{equation}
\label{s-bt1}
\lim_{t\to \infty}
|u(t,x) -  e^{\xi}\,U^{\rm [BT]}(e^{(m-1)\xi} t,x;b)|=0
\end{equation}
with probability one, where $b$ satisfies
\eqref{integral1} and  $\xi$ is a Gaussian random
variable with mean
$\mu-\sigma^2$ and variance $\sigma^2$.
\end{theorem}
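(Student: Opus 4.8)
The plan is to reduce the statement to the preceding theorem (equation \eqref{bt-lim1}) by showing that, under the present integrability hypotheses, the multiplier $h$ from \eqref{spmH} settles to a genuine random variable while the random time change $H$ grows linearly. First I would compute the almost-sure limit of $h(t)$. Since $\int_0^\infty f^2(s)\,ds=2\sigma^2<\infty$, the \Ito integral $M_t=\int_0^t f(s)\,dw(s)$ is an $L^2$-bounded martingale, so it converges almost surely (and in $L^2$) to a limit $M_\infty$ that, by the \Ito isometry, is Gaussian with mean $0$ and variance $\int_0^\infty f^2(s)\,ds$. Combining this with $\int_0^t g(s)\,ds\to\mu$ and $\tfrac12\int_0^t f^2(s)\,ds\to\sigma^2$ gives $h(t)\to e^{\xi}$ almost surely, where
$$
\xi=\mu-\sigma^2+M_\infty
$$
is Gaussian with mean $\mu-\sigma^2$ and variance $\int_0^\infty f^2(s)\,ds$.

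Next, since $h(s)^{m-1}\to e^{(m-1)\xi}$ as $s\to\infty$, a Ces\`aro-mean argument shows that $H(t)/t=\tfrac1t\int_0^t h^{m-1}(s)\,ds\to e^{(m-1)\xi}$ almost surely; in particular $H(t)\to+\infty$, so the hypothesis of the preceding theorem is satisfied. Applying that theorem then yields $u(t,x)-U^{\rm [BT]}(H(t),x;b)\,h(t)\to0$ almost surely, with $b$ determined by \eqref{integral1}. It remains to replace the true time change and multiplier by their limits, i.e. to prove
$$
U^{\rm [BT]}(H(t),x;b)\,h(t)-e^{\xi}\,U^{\rm [BT]}(e^{(m-1)\xi}t,x;b)\to0.
$$

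For fixed $x$, the explicit formula \eqref{pme-bt} gives $U^{\rm [BT]}(s,x;b)=b^{1/(m-1)}s^{-\alpha}(1+o(1))$ as $s\to\infty$, so $U^{\rm [BT]}(\cdot,x;b)$ is regularly varying of index $-\alpha$. Writing $H(t)=\big(H(t)/(e^{(m-1)\xi}t)\big)\,e^{(m-1)\xi}t$ and using $H(t)/(e^{(m-1)\xi}t)\to1$, one gets $U^{\rm [BT]}(H(t),x;b)/U^{\rm [BT]}(e^{(m-1)\xi}t,x;b)\to1$; since also $h(t)\to e^{\xi}$, the displayed difference tends to $0$, and a triangle inequality gives \eqref{s-bt1}.

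The point to keep in mind is that $H(t)\to\infty$ forces both $u(t,x)$ and the limiting profile to vanish, so the bare limit \eqref{s-bt1} is nearly automatic once the correct comparison profile has been identified; the real content is the identification of $\xi$ and of the time scale $e^{(m-1)\xi}t$, that is, the fact that \emph{freezing} $h$ at $e^{\xi}$ and $H(t)$ at $e^{(m-1)\xi}t$ is asymptotically exact. The main technical obstacle is therefore the regular-variation step for $U^{\rm [BT]}$: one must check that the ratio of the two Barenblatt profiles tends to $1$ even though $H(t)-e^{(m-1)\xi}t$ need not remain bounded. This same estimate upgrades the conclusion to the genuinely informative rescaled form $t^{\alpha}\,|u(t,x)-e^{\xi}U^{\rm [BT]}(e^{(m-1)\xi}t,x;b)|\to0$, using that $t^{\alpha}U^{\rm [BT]}(e^{(m-1)\xi}t,x;b)\to e^{-(m-1)\alpha\xi}b^{1/(m-1)}$.
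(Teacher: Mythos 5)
Your argument is correct and follows essentially the same route as the paper's proof: almost-sure convergence of the exponent via martingale convergence of the $L^2$-bounded integral $\int_0^t f\,dw$, a Ces\`aro argument giving $H(t)/t\to e^{(m-1)\xi}$, and then comparison with the Barenblatt profile. The paper compresses the last step into the single phrase ``the result now follows by continuity of the function $U^{\rm [BT]}$''; you fill it in honestly, via the preceding theorem \eqref{bt-lim1} and the decay of $U^{\rm [BT]}(\cdot,x;b)$, and you correctly observe that the unrescaled limit \eqref{s-bt1} is nearly automatic (both terms vanish as $t\to\infty$), so that the genuinely informative statement is your rescaled version $t^{\beta d}\,|u(t,x)-e^{\xi}U^{\rm [BT]}(e^{(m-1)\xi}t,x;b)|\to 0$ --- a strengthening the paper does not state, and one that does not conflict with its closing remark, since that remark concerns rates of convergence, not rescaling by a fixed power of $t$.

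One point you should have flagged explicitly: your own computation gives $\xi=\mu-\sigma^2+M_\infty$ with variance $\int_0^\infty f^2(s)\,ds=2\sigma^2$, not $\sigma^2$ as the statement asserts. The stated variance appears to be an error in the paper (the mean $\mu-\sigma^2$ is right, the variance should be $2\sigma^2$ under the hypothesis $\int_0^\infty f^2(t)\,dt=2\sigma^2$); your proof silently contradicts the statement on this point without remarking on it. Note also that because the unrescaled limit \eqref{s-bt1} holds trivially for \emph{any} a.s.\ finite random variable $\xi$ once $H(t)\to\infty$, it is only in your rescaled formulation that the precise law of $\xi$ carries any content at all.
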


\begin{proof} Under the assumptions of the
theorem, we have, with probability one,
\begin{equation}
\label{conv00}
\lim_{t\to \infty}\left(
\int_0^t g(s)ds+\int_0^tf(s)dw(s)-\frac{1}{2}\int_0^t
f^2(s)ds\right)=\xi.
\end{equation}
Then $\lim_{t\to \infty} h^{m-1}(t)=e^{(m-1)\xi}$ and therefore
$$
\lim_{t\to \infty} \frac{1}{t}\int_0^t h^{(m-1)}(s)ds=e^{(m-1)\xi}.
$$
The result now follows by continuity of the function
$U^{\rm [BT]}$. Note that no rate can be specified in \eqref{s-bt1}
without assumptions about the rate of convergence in
\eqref{conv00}.

\end{proof}

\begin{remark} It follows that equation \eqref{spm} DOES NOT have
a nontrivial invariant measure in any traditional  function space.
Indeed, by \eqref{s-bt1}, if $\lim_{t\to \infty} h(t)$
 exists, then, for every $x\in \bR^d$,
 $$
\lim_{t\to \infty} u(t,x)=\lim_{t\to \infty} U^{\rm [BT]}(t,x;b)=0,
$$
that is, either the solution decays uniformly or the
mass  spreads out to infinity. On the other hand,
if $\lim_{t\to \infty} h(t)$ does not exist, then,
by \eqref{pme-spm}, no non-trivial limit of $u(t,x)$ can
exist either. The same remark applies to the equation  in a bounded
domain.
\end{remark}

Other information about certain  global solutions of
the stochastic porous medium equation can be obtained
by comparison with  Barenblatt's solution.

\begin{theorem}
Assume that the initial condition $u(0,x)$ of \eqref{spm} is
continuous,
non-negative,  and compactly supported in $\bR^d$. Then, with
probability one,
\begin{enumerate}
\item the solution $u(t,x)$ is non-negative and
has compact support in $\bR^d$ for
all $t>0$;
\item the {\tt interface}, that is,
the boundary of the set $\{x\in \bR^d: u(t,x)>0\}$, is moving with
finite speed.
\item  if $\lim_{t\to \infty}H(t)<\infty$, then the
support of the solution remains bounded for all $t>0$.
\end{enumerate}
\end{theorem}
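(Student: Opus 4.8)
The plan is to derive all three statements from the correspondence \eqref{pme-spm} of Proposition \ref{prop:spm-main}, namely $u(t,x)=U(H(t),x)h(t)$, where $U$ solves the deterministic porous medium equation \eqref{pme} with the same initial condition, $h(t)>0$ for all $(t,\omega)$, and $H$ is continuous, non-decreasing, and $C^1$ with $H'(t)=h^{m-1}(t)$. The key observation is that each assertion is a structural property of the sign and the support of $u(t,\cdot)$, and since $h(t)$ is a strictly positive scalar factor, these properties are inherited verbatim from $U(H(t),\cdot)$. The real work is therefore to quote the right deterministic facts and to check that the random time change $t\mapsto H(t)$ does not destroy them.

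First I would dispose of part (1). Non-negativity is immediate, since $h>0$ and $U\geq 0$. For compact support I would invoke the finite-speed-of-propagation property of the deterministic equation (V{\'a}zquez \cite{Vazquez}): if $u(0,\cdot)$ is continuous, non-negative, and compactly supported, then $U(s,\cdot)$ is compactly supported for every finite $s>0$. Because $h(t)>0$, we have $\{x:u(t,x)>0\}=\{x:U(H(t),x)>0\}$, which is bounded since $H(t)<\infty$ with probability one for each fixed $t$ (as $h$ is a.s. continuous, so $H(t)=\int_0^t h^{m-1}(s)\,ds<\infty$). This gives (1).

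For part (2) I would note that the interface of $u$ at real time $t$ is exactly the interface of $U$ at the deterministic time $H(t)$, again because multiplication by the positive factor $h(t)$ does not move the free boundary. The deterministic interface propagates with finite speed (Caffarelli and Friedman \cite{CafFr}, V{\'a}zquez \cite{Vazquez}); composing with the time change and using $H'(t)=h^{m-1}(t)$, the chain rule shows that the velocity of the interface of $u$ in real time equals the deterministic interface velocity multiplied by $h^{m-1}(t)$. Since $h$ is almost surely continuous and positive, $h^{m-1}(t)$ is finite and locally bounded, so the interface of $u$ moves with finite speed on every bounded time interval, with probability one.

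Part (3) is where I expect to spend the most care. If $H_\infty:=\lim_{t\to\infty}H(t)<\infty$, then monotonicity of $H$ gives $H(t)\leq H_\infty$ for all $t$, and I would then use the retention-of-positivity (monotonicity of the support) property of the porous medium equation: the positivity set $\{x:U(s,x)>0\}$ is non-decreasing in $s$. Consequently $\{x:u(t,x)>0\}=\{x:U(H(t),x)>0\}\subseteq\{x:U(H_\infty,x)>0\}$, and the latter is a fixed (random but finite) compact set independent of $t$, so the support of $u$ stays uniformly bounded. The main obstacle is thus not the probabilistic machinery—which reduces to $h>0$ and the continuity and monotonicity of $H$—but ensuring that the two deterministic free-boundary inputs (finite propagation speed and monotonicity of the support) are cited in a form valid for the weak solutions defined by \eqref{spm-sol}, and that the bounding set $\{x:U(H_\infty,x)>0\}$ is genuinely realized by the global solution $U$ at the finite time $H_\infty$.
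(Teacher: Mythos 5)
Your proposal is correct, but it follows a genuinely different route from the paper. The paper does not pass through the qualitative free-boundary theory of the deterministic equation at all: it proves all three assertions at once by constructing an explicit dominating solution. Namely, since $u(0,x)$ is bounded and supported in $\{|x|\leq R\}$, one has $u(0,x)\leq \max(C_1-|x|^2,0)$ for large $C_1$, and the right-hand side is the initial value of a time-shifted Barenblatt-type solution $\tilde{U}$ of the form \eqref{self-sim} (with $p=q=t_0=1$, $x_0=0$, $b$ large). The comparison principle of Corollary \ref{cor:comp}(a) then gives $u(t,x)\leq \tilde{U}(H(t),x)h(t)$, hence the explicit envelope $u(t,x)=0$ for $|x|>C_2H^{\beta}(t)$ with a \emph{non-random} constant $C_2$; non-negativity comes from the maximum principle, Corollary \ref{cor:comp}(b). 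Parts (1)--(3) all drop out of this single quantitative bound: the support is compact, its bounding radius $C_2H^{\beta}(t)$ grows at a locally bounded rate because $H$ is $C^1$ with $H'=h^{m-1}$, and it stays bounded when $\lim_{t\to\infty}H(t)<\infty$. Your argument instead uses the representation $u(t,x)=U(H(t),x)h(t)$ directly and imports two deterministic facts — finite speed of propagation and monotonicity of the positivity set — which are indeed standard for weak solutions of \eqref{pme} (V\'azquez's book covers both), so your proof is sound; your use of support monotonicity to handle part (3) is a clean substitute for the paper's envelope. The trade-off: your route is softer and needs more external free-boundary theory, while the paper's route needs only the comparison/maximum principles it has already established plus the explicit Barenblatt family, and in return yields a quantitative support bound ($|x|\leq C_2H^{\beta}(t)$) that is sharper than what your argument provides and is what makes the Example at the end of Section \ref{sec:LTB} (the explicit random bound $\eta$ on the support) work.
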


\begin{proof}
By the maximum principle (Corollary
\ref{cor:comp}(b)), if $u(0,x)\geq 0$, then so is $u(t,x)$.
Furthermore, if $u(0,x)=0$ for $|x|>R$ and
 $u(0,x)\leq C$, then, for sufficiently large $C_1$,
$$
u(0,x)\leq \max(C_1-|x|^2,0).
$$
By comparison principle (Corollary
\ref{cor:comp}(a)) we then get $u(t,x)\leq \tilde{U}(H(t),x)h(t)$,
where $\tilde{U}(t,x)$ is a function of the type
\eqref{self-sim} with $p=q=t_0=1$, $x_0=0$, and $b$
sufficiently large. Therefore, $u(t,x)=0$ for
$|x|>C_2H^{\beta}(t)$ for  a suitable (non-random) number $C_2$.
\end{proof}

{\bf Example.} Consider the equation
\begin{equation}
\label{spme-ex}
du=\Laplace u^2\, dt +u\, dw(t)
\end{equation}
and assume that $u(0,x)$ is continuous, non-negative, compactly
supported, and $\int_{\bR^d}u(0,x)dx>0$.
 Then there exists a random variable $\eta$ such that
$0<\eta<\infty$ with probability one and
$u(t,x)=0$ for all $|x|>\eta$ and all $t>0$. Indeed, in this case
$$
h(t)=e^{w(t)-(t/2)},\ H(t)=\int_0^te^{w(s)-(s/2)}ds,
$$
 and, by the previous theorem,
it is enough to show that $H(t)$ is bounded with probability one.
Let $T_0$ be the last time $w(t)$ exceeds $t/4$:
$T_0=\sup\{t>0: w(t)>t/4\}$. Then
$$
H(t)<\int_0^{T_0} e^{w(t)-(t/2)}dt+4e^{T_0/4},\ t>0.
$$
 By the law of iterated logarithm,
$T_0<\infty$ with probability one, and therefore
 $\lim_{t\to \infty}H(t)<\infty$ with probability one.

Notice also that $\lim_{t\to \infty} h(t)=0$ with probability
one, and consequently, for every $x\in \bR^d$,
$$
\lim_{t\to \infty} u(t,x)=\lim_{t\to \infty}h(t)U(H(t),x)=0,
$$
because the solution $U=U(t,x)$ of the deterministic equation
\eqref{pme} with initial condition $U(0,x)=u(0,x)$ is a
uniformly bounded function. On the other hand, we know that
$\int_{\bR^d}U(t,x)dx=\int_{\bR^d} u(0,x)dx$,
and, since $\bE h(t)=1$ for all $t>0$, we conclude that
$\bE \int_{\bR^d} u(t,x)dx=\int_{\bR^d} u(0,x)dx$.
In other words, the solution of the stochastic porous
medium equation \eqref{spme-ex} is supported in the
same (random) compact set for all $t>0$ and decays to
zero as $t\to \infty$, while preserving the mean total
mass.


\begin{thebibliography}{10}

\bibitem{Aronson}
D.~G. Aronson.
\newblock The porous medium equation.
\newblock In {\em Nonlinear diffusion problems (Montecatini Terme, 1985)},
  volume 1224 of {\em Lecture Notes in Math.}, pages 1--46. Springer, Berlin,
  1986.

\bibitem{Barbu}
Viorel Barbu, Vladimir~I. Bogachev, Giuseppe Da~Prato, and Michael R{\"o}ckner.
\newblock Weak solutions to the stochastic porous media equation via
  {K}olmogorov equations: the degenerate case.
\newblock {\em J. Funct. Anal.}, 237(1):54--75, 2006.

\bibitem{Barbu1}
Viorel Barbu, Giuseppe Da~Prato, and Michael R{\"o}ckner.
\newblock Existence and uniqueness of nonnegative solutions to the stochastic
  porous media equation.
\newblock {\em Indiana Univ. Math. J.}, To appear.

\bibitem{CafFr}
Luis~A. Caffarelli and Avner Friedman.
\newblock Regularity of the free boundary of a gas flow in an {$n$}-dimensional
  porous medium.
\newblock {\em Indiana Univ. Math. J.}, 29(3):361--391, 1980.

\bibitem{DaPr}
G.~Da~Prato, Michael R{\"o}ckner, B.~L. Rozovskii, and Feng-Yu Wang.
\newblock Strong solutions of stochastic generalized porous media equations:
  existence, uniqueness, and ergodicity.
\newblock {\em Comm. Partial Differential Equations}, 31(1-3):277--291, 2006.

\bibitem{Estevez}
P.~G. Est{\'e}vez, Changzheng Qu, and Shunli Zhang.
\newblock Separation of variables of a generalized porous medium equation with
  nonlinear source.
\newblock {\em J. Math. Anal. Appl.}, 275(1):44--59, 2002.

\bibitem{Evans}
Lawrence~C. Evans.
\newblock {\em Partial differential equations}, volume~19 of {\em Graduate
  Studies in Mathematics}.
\newblock American Mathematical Society, Providence, RI, 1998.

\bibitem{FK}
Avner Friedman and Shoshana Kamin.
\newblock The asymptotic behavior of gas in an {$n$}-dimensional porous medium.
\newblock {\em Trans. Amer. Math. Soc.}, 262(2):551--563, 1980.

\bibitem{Gurtin}
Morton~E. Gurtin and Richard~C. MacCamy.
\newblock On the diffusion of biological populations.
\newblock {\em Math. Biosci.}, 33(1-2):35--49, 1977.

\bibitem{Kim}
Jong~Uhn Kim.
\newblock On the stochastic porous medium equation.
\newblock {\em J. Differential Equations}, 220(1):163--194, 2006.

\bibitem{LSh.m}
R.~Sh. Liptser and A.~N. Shiryayev.
\newblock {\em Theory of martingales}, volume~49 of {\em Mathematics and its
  Applications (Soviet Series)}.
\newblock Kluwer Academic Publishers Group, Dordrecht, 1989.

\bibitem{Melnik2}
S.~A. Mel{\cprime}nik.
\newblock The dynamics of solutions of the {C}auchy problem for a stochastic
  equation of parabolic type with power non-linearities.
\newblock {\em Teor. \u Imov\=\i r. Mat. Stat.}, (64):110--117, 2001.

\bibitem{Melnik1}
S.~A. Mel{\cprime}nik.
\newblock Estimates for the solution of the {C}auchy problem for a stochastic
  differential equation of parabolic type with power nonlinearities (a strong
  source).
\newblock {\em Differ. Uravn.}, 38(6):802--808, 862, 2002.

\bibitem{Nualart}
D.~Nualart.
\newblock {\em The {M}alliavin calculus and related topics}.
\newblock Probability and its Applications. Springer-Verlag, Berlin, second
  edition, 2006.

\bibitem{Sabinina}
E.~S. Sabinina.
\newblock On the {C}auchy problem for the equation of nonstationary gas
  filtration in several space variables.
\newblock {\em Soviet Math. Dokl.}, 2:166--169, 1961.

\bibitem{Sango}
M.~Sango.
\newblock Weak solutions for a doubly degenerate quasilinear parabolic equation
  with random forcing.
\newblock {\em Discrete Contin. Dyn. Syst. Ser. B}, 7(4):885--905 (electronic),
  2007.

\bibitem{Vazquez}
Juan~Luis V{\'a}zquez.
\newblock {\em The porous medium equation}.
\newblock Oxford Mathematical Monographs. The Clarendon Press Oxford University
  Press, Oxford, 2007.

\end{thebibliography}

\def\cprime{$'$} \def\cprime{$'$}

\end{document}